\newcommand \reg{\operatorname{reg}}
\newcommand \Tor{\operatorname{Tor}}
\newcommand \pd{\operatorname{projdim}}
\newcommand \ca{\operatorname{c}}
\newcommand \ba{\operatorname{b}}
\newcommand \Ba{\operatorname{B}}
\newcommand\ck{\operatorname{c_k}}
\newcommand\cj{\operatorname{c_j}}
\newcommand\cab{\operatorname{c_4}}
\newcommand \K{\mathbb{K}}
\newcommand{\T}{\operatorname{Tor}}
\newtheorem{theorem}{Theorem}[section]
\newtheorem{lemma}[theorem]{Lemma}
\newtheorem{corollary}[theorem]{Corollary}
\begin{document}
\title[Bound for the regularity of Cactus Graphs]{Bound for the regularity of binomial edge ideals of cactus graphs}
\author[A. V. Jayanthan]{A. V. Jayanthan}
\email{jayanav@iitm.ac.in}
\author[Rajib Sarkar]{Rajib Sarkar$^1$}
\email{rajib.sarkar63@gmail.com}
\thanks{$^1$ The author is funded by University Grants Commission,
	India}
\address{Department of Mathematics, Indian Institute of Technology
	Madras, Chennai, INDIA - 600036}

\begin{abstract}
In this article, we obtain an upper bound for the regularity of the
binomial edge ideal of a graph whose every block is either a cycle or
a clique. As a consequence, we obtain an upper bound for the 
regularity of binomial edge ideal of a cactus graph. We also identify
certain subclass attaining the upper bound.
\end{abstract}
\keywords{Binomial edge ideal, Cactus graph, Cohen-Macaulayness, Regularity}
\thanks{AMS Subject Classification (2010): 13D02,13C13, 05E40}
\maketitle
\section{Introduction}
Let $R = \K[x_1,\ldots,x_m]$ be the standard graded polynomial ring over an arbitrary
field $\K$ and $M$ be a finitely generated graded  $R$-module. 
Let
\[
0 \longrightarrow \bigoplus_{j \in \mathbb{Z}} R(-p-j)^{\beta_{p,p+j}(M)} 
{\longrightarrow} \cdots {\longrightarrow} \bigoplus_{j \in \mathbb{Z}} R(-j)^{\beta_{0,j}(M)} 
{\longrightarrow} M\longrightarrow 0,
\]
be the minimal graded free resolution of $M$. The number $\beta_{i,j}(M)$ is called the 
$(i,j)$-th \textit{graded Betti number} of $M$.  The 
\textit{projective dimension} of $M$, denoted by $\pd(M)$, is defined as 
\[\pd(M):=\max\{i : \beta_{i,i+j}(M) \neq 0\}\]
and the \textit{Castelnuovo-Mumford regularity} (or simply, \textit{regularity}) of $M$, denoted by $\reg(M)$, is defined as 
\[\reg(M):=\max \{j : \beta_{i,i+j}(M) \neq 0\}. \]
The Betti number $\beta_{i,i+j}(M)$ is called an \textit{extremal Betti number} if $\beta_{i,i+j}(M)\neq 0$ and for all pairs of integers $(k,l)\neq (i,j)$, $\beta_{k,k+l}(M)=0$ where $k\geq i$ and $l\geq j$. If $p =\pd(M)$ and $r=\reg(M)$, then $M$ admits unique extremal Betti number if and only if 
$\beta_{p,p+r}(M)\neq 0$.
 
For a graph $G$, on $[n]$, the \textit{binomial edge ideal} of $G$ is the ideal
$J_G = \langle x_iy_j - x_j y_i : \{i, j\} \in E(G) \rangle \subset
S = \K[x_1, \ldots, x_n, y_1, \ldots, y_n]$, \cite{HH1, oh}. There
has been a lot of research in understanding the connection between
algebraic properties of $J_G$ and combinatorial properties of $G$. In
particular, researchers have been trying to understand the connection
between algebraic invariants of $J_G$, such as Betti numbers,
Castelnuovo-Mumford regularity, projective dimension, in terms of the
combinatorial invariants of $G$ such as number of maximal cliques in
$G$, length of maximal induced path. In this
paper, we deal with the regularity of $J_G$.

It has been conjectured by Saeedi Madani and Kiani that for a graph
$G$, $\reg(S/J_G) \leq \ca(G)$, where $\ca(G)$ denotes the number of
maximal cliques in $G$, \cite{KM2}. This conjecture has been proved
for only a few classes of graphs, see
\cite{EZ,JA1,Kahle19,KM-arxiv13,Arv-Jaco,MM,KM18,KM1}.
A connected graph $G$ is said to be a
\textit{cactus} graph if every block of $G$ is either a cycle or an
edge (see Section 2 for details). In this
article, we obtain an upper bound, which is finer than
Saeedi Madani-Kiani conjectured upper bound, for the regularity of
cactus graphs. For a cactus graph having a lot of cycles as blocks, it
turns out that the invariant $\ca(G)$ is much larger than the
upper bound that we have obtained. We also prove that the upper bound
is attained by a subclass of Cohen-Macaulay cactus graphs. In fact, we
prove our results for a slightly larger class of graphs. In the next
section we recall the necessary definitions and some of the results
from the literature which are crucial to the proofs of main results. In
Section 3, we prove the upper bound and in Section 4, we identify a
class of graphs for which the regularity upper bound is attained.
\section{Preliminaries}
Let us recall some basic definitions and notation from graph theory, which will be used throughout the article.
Let $G$ be a simple graph on the vertex set $V(G)=[n]:=\{1,\dots,n\}$ and the edge set $E(G)$.
A graph $G$ on $[n]$ is called a \textit{complete graph}, if
$\{i,j\} \in E(G)$ for all $1 \leq i < j \leq n$. We denote the complete graph on $n$ vertices by $K_n$. For $A \subseteq V(G)$, the
\textit{induced subgraph} of $G$ on the vertex set $A$, denoted by $G[A]$, is the graph such that for any pair of vertices
$i,j \in A$, $\{i,j\} \in E(G[A])$ if and only if $ \{i,j\} \in
E(G)$.  For a vertex $v\in V(G)$, $G \setminus v$ denotes the  induced
subgraph of $G$ on the vertex set $V(G) \setminus \{v\}$.
For
$U\subseteq V(G)$, $U$ is called a \textit{clique} if the induced subgraph $G[U]$ is the complete graph. A vertex $v\in V(G)$ is said to be a \textit{simplicial vertex} if there is only one maximal clique containing $v$. If $v$ is not a simplicial vertex, then $v$ is called an \textit{internal vertex}. For a vertex $v\in V(G)$, let
$N_G(v) := \{u \in V(G) :  \{u,v\} \in E(G)\}$ denote the
\textit{neighborhood} of $v$ in $G$ and  $G_v$ is the graph with the vertex set
$V(G)$ and edge set $E(G_v) =E(G) \cup \{ \{u,w\}: u,w \in N_G(v)\}$. A graph with the vertex set $[n]$ and the edge set $\{i,i+1:1\leq i\leq n-1\}$ is called a \textit{path} and it is denoted by $P_n$. A
\textit{cycle} on the vertex set $[n]$, denoted by $C_n$, is a graph
with the edge set $\{i,i+1:1\leq i\leq n-1\}\cup \{1,n\}$ for $n\geq
3$. A vertex $v\in V(G)$ is called a \textit{cut vertex} if
$G\setminus v$ has more number of components than that of $G$. A
maximal connected subgraph of $G$, which has no cut vertex is called a
\textit{block} of $G$. A graph $G$ is called a \textit{block graph} if every block of $G$ is a clique. For a block graph $G$, a block of $G$ is called a leaf of $G$ if that block contains at most one cut vertex. A connected graph $G$ is said to be a
\textit{cactus} graph if every block of $G$ is either a cycle or an
edge. The \textit{block graph} of $G$, denoted by $\Ba(G)$, is the graph whose vertices are the blocks of $G$ and two vertices are adjacent whenever the corresponding blocks have a common cut vertex. The graph 
on the vertex set $[4]$ and with $E(G)=E(C_4)\cup
\{\{1,3\} \}$ or $E(G)=E(C_4)\cup \{\{2,4\} \}$ is called a
\textit{diamond} graph and is denoted by $D$.	If $G_1(\neq K_m)$ and
$G_2$ are two subgraphs of a graph $G$ such that $G_1\cap
G_2=K_m$, $V(G_1)\cup V(G_2)=V(G)$ and $E(G_1)\cup E(G_2)=E(G)$, then
$G$ is called the \textit{clique sum} of $G_1$ and $G_2$ along the
complete graph $K_m$, denoted by $G_1\cup_{K_m} G_2$. If the clique
sum is along an edge $e$, then we write $G_1 \cup_e G_2$ and if the
clique sum is along a vertex $v$, then we write $G_1 \cup_v G_2$.

Throughout this article, $S$ denotes the polynomial ring over $\K$ in
$2|V(G)|$ number of variables and in which the binomial edge ideal
$J_G$ resides. For a graph $H$, we denote by $S_H$, the polynomial
ring over $\K$ in $2|V(H)|$ number of variables and $J_H \subset S_H$.
If $H$ is a subgraph of $G$, we would assume that $S_H$ is a subring
of $S$. We say that $G$ is a \textit{Cohen-Macaulay graph} if $S/J_G$ is
Cohen-Macaulay.

Let $A=\K[x_1,\dots,x_m]$, $A'=\K[y_1,\dots,y_n]$ and $B=\K[x_1,\dots,x_m,y_1,\dots,y_n]$ be
polynomial rings. Let $I\subseteq A$ and $J\subseteq A'$ be
homogeneous ideals. Then the minimal free resolution of $B/(I+J)$ can
be obtained by the tensor product of the minimal free resolutions of
$A/I$ and $A'/J$. Therefore, for all $i,j$, we get:

\begin{align}\label{Bettiproduct}
\beta_{i,i+j}\left(\frac{B}{I+J}\right) =
\underset{{\substack{i_1+i_2=i \\
j_1+j_2=j}}}{\sum}\beta_{i_1,i_1+j_1}\left(\frac{A}{I}\right)\beta_{i_2,i_2+j_2}\left(\frac{A'}{J}\right).
\end{align}
The following lemma can be easily derived from the long exact sequence
of $\T$ corresponding to given short exact sequence.
\begin{lemma}\label{regularity-lemma}
	Let $R$ be a standard graded ring  and $M,N,P$ be finitely generated graded $R$-modules. 
	If $ 0 \rightarrow M \xrightarrow{f}  N \xrightarrow{g} P \rightarrow 0$ is a 
	short exact sequence with $f,g$  
	graded homomorphisms of degree zero, then 
	\begin{enumerate}[(i)]
		\item $\reg(M)\leq \max \{\reg(N),\reg(P)+1 \}$,
		\item $\reg (M) = \reg (N)$ if  $\reg (N) > \reg (P)$.
		
	\end{enumerate}	
\end{lemma}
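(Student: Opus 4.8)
The plan is to reduce the statement to vanishing and nonvanishing properties of graded $\Tor$ modules and read everything off from the long exact sequence, as the preamble suggests. Recall that for any finitely generated graded $R$-module $L$ one has $\beta_{i,i+j}(L)=\dim_{\K}\Tor_i^R(L,\K)_{i+j}$, so that $\reg(L)=\max\{j : \Tor_i^R(L,\K)_{i+j}\neq 0 \text{ for some } i\}$. Applying $\Tor_\bullet^R(-,\K)$ to the given short exact sequence produces, for each internal degree $d$, the long exact sequence of graded $\K$-vector spaces
\[
\cdots \to \Tor_{i+1}^R(P,\K)_d \xrightarrow{\partial} \Tor_i^R(M,\K)_d \to \Tor_i^R(N,\K)_d \to \Tor_i^R(P,\K)_d \to \Tor_{i-1}^R(M,\K)_d \to \cdots,
\]
in which every map preserves the internal degree $d$. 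All of the work is then a matter of exactness at the appropriate spots.

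For part (i), I would fix a pair $(i,t)$ with $\beta_{i,i+t}(M)\neq 0$ and put $d=i+t$. Exactness at $\Tor_i^R(M,\K)_d$ forces at least one of its two neighbours in the sequence to be nonzero: either the map $\Tor_i^R(M,\K)_d \to \Tor_i^R(N,\K)_d$ is nonzero, so $\Tor_i^R(N,\K)_{i+t}\neq 0$ and hence $\reg(N)\geq t$; or that map is zero, in which case the connecting homomorphism $\partial$ is surjective onto a nonzero space, giving $\Tor_{i+1}^R(P,\K)_d\neq 0$. Rewriting $d=(i+1)+(t-1)$ shows $\beta_{i+1,(i+1)+(t-1)}(P)\neq 0$, whence $\reg(P)\geq t-1$. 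In either case $t\leq\max\{\reg(N),\reg(P)+1\}$, and taking the maximum over all admissible pairs yields (i). The one point requiring attention is the homological shift: the summand $\reg(P)+1$ is precisely the price of the index jump from $i$ to $i+1$ in the connecting map, and it is this $+1$ that one must track carefully.

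For part (ii), inequality (i) already gives $\reg(M)\leq\max\{\reg(N),\reg(P)+1\}=\reg(N)$, since the hypothesis $\reg(N)>\reg(P)$ means $\reg(N)\geq\reg(P)+1$. For the reverse inequality, set $r=\reg(N)$ and choose an index $i$ with $\Tor_i^R(N,\K)_{i+r}\neq 0$. Here I would use the hypothesis in its strong form: because $r>\reg(P)$, we have $\Tor_i^R(P,\K)_{i+r}=0$ for \emph{every} $i$. Exactness of $\Tor_i^R(M,\K)_{i+r}\to\Tor_i^R(N,\K)_{i+r}\to\Tor_i^R(P,\K)_{i+r}=0$ then makes the first map surjective onto a nonzero space, so $\Tor_i^R(M,\K)_{i+r}\neq 0$ and $\reg(M)\geq r=\reg(N)$. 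Combined with the upper bound this gives $\reg(M)=\reg(N)$. The argument is entirely formal once the dictionary between Betti numbers and graded $\Tor$ is in place, so there is no serious obstacle; the only care needed is to invoke $\reg(P)<r$ as the uniform vanishing $\Tor_i^R(P,\K)_{i+r}=0$ across all homological degrees.
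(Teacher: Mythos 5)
Your proof is correct and follows exactly the route the paper intends: the paper gives no written proof at all, merely remarking that the lemma "can be easily derived from the long exact sequence of $\Tor$ corresponding to the given short exact sequence," which is precisely the argument you carry out, with the degree bookkeeping (the $+1$ from the connecting homomorphism in (i), and the uniform vanishing $\Tor_i^R(P,\K)_{i+r}=0$ in (ii)) handled correctly.
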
	
We recall the following lemma due to Ohtani.
\begin{lemma}$($\cite[Lemma 4.8]{oh}$)$\label{ohtani-lemma}
Let $G$ be a  graph on $V(G)$ and $v\in V(G)$ such that $v$ is an internal vertex. Then $J_G$ can be written as
	$$J_{G}=J_{G_v}\cap Q_v, \text{ where } Q_v = (x_v, y_v) + J_{G\setminus v}.$$
\end{lemma}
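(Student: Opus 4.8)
The plan is to prove the two inclusions separately, using the well-known primary decomposition of binomial edge ideals. Recall from Herzog--Hibi--Hreinsd\'ottir--Kahle--Rauh that for any graph $H$ and any subset $W \subseteq V(H)$ one has a prime ideal
\[
P_W(H) = (x_i, y_i : i \in W) + J_{\widetilde{H}_1} + \cdots + J_{\widetilde{H}_c},
\]
where $\widetilde{H}_1, \dots, \widetilde{H}_c$ are the complete graphs on the vertex sets of the connected components of $H \setminus W$; moreover $J_H$ is radical and $J_H = \bigcap_W P_W(H)$, the intersection running over the cut sets of $H$ (equivalently, over the minimal primes of $J_H$). The only general fact I will use is that $J_H \subseteq P_W(H)$ for \emph{every} $W \subseteq V(H)$: a generator $x_ay_b - x_by_a$ with $\{a,b\}\in E(H)$ either meets $W$, and then lies in $(x_a,y_a)+(x_b,y_b)$, or has both endpoints in a single component of $H\setminus W$, and then lies in the corresponding $J_{\widetilde{H}_k}$.

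For the easy inclusion $J_G \subseteq J_{G_v}\cap Q_v$ I would just check generators: each $x_iy_j-x_jy_i$ with $\{i,j\}\in E(G)$ lies in $J_{G_v}$ since $E(G)\subseteq E(G_v)$, and it lies in $Q_v$ because it belongs to $(x_v,y_v)$ when $v\in\{i,j\}$ and to $J_{G\setminus v}$ otherwise.

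For the reverse inclusion $J_{G_v}\cap Q_v\subseteq J_G$, I would take $f\in J_{G_v}\cap Q_v$ and, since $J_G=\bigcap_U P_U(G)$, show $f\in P_U(G)$ for every $U\subseteq V(G)$, splitting into two cases. If $v\in U$, then $(x_v,y_v)\subseteq P_U(G)$, and the same generator-by-generator check shows $J_{G\setminus v}\subseteq P_U(G)$ (every edge of $G$ avoiding $v$ either meets $U$ or has both endpoints in one component of $G\setminus U$); hence $Q_v\subseteq P_U(G)$, and $f\in Q_v$ gives $f\in P_U(G)$. If $v\notin U$, the crucial observation is that $G\setminus U$ and $G_v\setminus U$ have exactly the same connected components: the edges added to form $G_v$ join vertices of $N_G(v)$, all of which already lie in the component of $v$ in $G\setminus U$, so they create no new connections. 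Consequently $P_U(G)=P_U(G_v)$, and the general fact applied to $G_v$ gives $f\in J_{G_v}\subseteq P_U(G_v)=P_U(G)$. In either case $f\in P_U(G)$, so $f\in J_G$.

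The step I expect to be the main obstacle is the case $v\notin U$, specifically the verification that passing from $G$ to $G_v$ leaves the component partition of $G\setminus U$ unchanged, so that $P_U(G)=P_U(G_v)$; once this is in hand, everything else is bookkeeping with generators. I note that the argument never uses that $v$ is internal, so the identity in fact holds for any vertex; the hypothesis is relevant only because the resulting decomposition is useful for induction precisely when $G_v\neq G$ and $G\setminus v$ is a proper subgraph. As an aside, one can streamline the reverse inclusion by first observing $Q_v=J_G+(x_v,y_v)$ and applying the modular law to reduce to $J_{G_v}\cap(x_v,y_v)\subseteq J_G$, but this reduction still rests on the same prime-by-prime analysis.
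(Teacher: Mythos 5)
Your proof is correct, but note that the paper itself offers no proof of this lemma: it is quoted directly from Ohtani's article (Lemma 4.8 of the cited reference), so the comparison must be with Ohtani's original argument rather than with anything in this paper. Your route rests on the Herzog--Hibi--Hreinsd\'ottir--Kahle--Rauh decomposition $J_G=\bigcap_{U}P_U(G)$: the containment $J_G\subseteq J_{G_v}\cap Q_v$ is a generator check, and the reverse inclusion reduces to the key observation that for $v\notin U$ the graphs $G\setminus U$ and $G_v\setminus U$ have the same connected components, so $P_U(G)=P_U(G_v)$. That observation is right, up to one harmless imprecision: the completed edges that survive in $G_v\setminus U$ join vertices of $N_G(v)\setminus U$ (not of all of $N_G(v)$), and those vertices do lie in the component of $v$ in $G\setminus U$, so the argument stands. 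Ohtani's own proof is more elementary and self-contained: it never invokes the decomposition theorem or the radicality of $J_G$, but manipulates binomials directly, the engine being identities such as $x_v(x_uy_w-x_wy_u)=x_u(x_vy_w-x_wy_v)+x_w(x_uy_v-x_vy_u)$ and its $y$-analogue, which show that $x_v(x_uy_w-x_wy_u)$ and $y_v(x_uy_w-x_wy_u)$ lie in $J_G$ for $u,w\in N_G(v)$ and yield exactly the reduction $J_{G_v}\cap(x_v,y_v)\subseteq J_G$ that you sketch in your aside via the modular law. What your approach buys is brevity and conceptual transparency once the decomposition theorem is taken as a black box, and it matches the spirit of Rinaldo's description $Q_v=\bigcap_{T\in\mathscr{C}(G),\,v\in T}P_T(G)$ that the paper quotes immediately after the lemma; you also correctly observe that internality of $v$ is never used in the identity itself, the hypothesis serving only to guarantee $G_v\neq G$ so that the decomposition is nontrivial.
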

Rinaldo proved that 
$Q_v=\underset{{\substack{T\in \mathscr{C}(G), v
\in T}}} \cap P_{T}(G)$, \cite[Corollary 1.1]{Rinaldo-Cactus}.
\section{Regularity of Cactus Graph}
In this section we consider graphs whose every block is either a cycle
or a clique. For a graph $G$, we denote by $\ba(G)$ the number of blocks
of $G$ and by $\mathcal{C}(G)$ the number of cycles of length $\geq
4$. Let $\cj(G)$ denote the number of
cycles of length $j$ in $G$. Then we have $\mathcal{C}(G)=\sum_{k\geq
4}\ck(G)$. First, we prove a technical property that is required in
the main proof.
\begin{lemma}\label{tech-lemma}
Let $G$ be a graph with $\mathcal{C}(G) \geq 1$ and such that each block of $G$ is either a cycle or a clique. Then there exists a cycle $C$ of length $r(\geq 4)$ such that $r-2$ consecutive vertices of $C$ are not part of any other cycles of length $\geq 4$ in $G$.
\end{lemma}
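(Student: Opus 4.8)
The plan is to exploit the elementary fact that two distinct blocks of a graph meet in at most one vertex. Since every block of $G$ is a cycle or a clique, the cycles of length $\geq 4$ in question are exactly the cycle-blocks $C_r$ with $r\geq 4$, and two distinct such cycle-blocks can share at most a single cut vertex. Hence a vertex $v$ of a cycle-block $C$ of length $\geq 4$ is \emph{part of another} cycle of length $\geq 4$ precisely when $v$ is a cut vertex of $G$ lying on a second such cycle. The whole problem therefore reduces to producing one cycle-block of length $\geq 4$ whose shared vertices are as few as possible, and I would obtain it by an extremality argument.

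First I would reduce to the connected case: since $\mathcal{C}(G)\geq 1$, some connected component of $G$ contains a cycle of length $\geq 4$, and a cycle that is good within that component remains good in $G$. So assume $G$ is connected and consider its block-cut tree $T$, the tree whose nodes are the blocks and the cut vertices of $G$, a block being adjacent to exactly those cut vertices it contains. I would root $T$ at any node corresponding to a cycle of length $\geq 4$, and among all nodes of $T$ that are cycles of length $\geq 4$ choose one, say $C$, of maximum depth.

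The heart of the argument is the claim that the only vertex of $C$ that can lie on another cycle of length $\geq 4$ is the cut vertex $w_0$ joining $C$ to its parent in $T$. Indeed, suppose a vertex $w$ of $C$ also lies on a distinct cycle $C'$ of length $\geq 4$; then in $T$ both $C$ and $C'$ are adjacent to the cut-vertex node $w$. Since $C$ is adjacent to $w$, the node $w$ is either the parent of $C$ or a child of $C$. If $w$ were a child of $C$, then $C'$, being a neighbour of $w$ different from $C$, would be a child of $w$ and hence strictly deeper than $C$, contradicting the maximality of the depth of $C$. Therefore $w$ is the parent of $C$, that is $w=w_0$. As $C$ has a unique parent, at most one vertex of $C$—namely $w_0$—can lie on another cycle of length $\geq 4$.

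Writing the vertices of $C$ in cyclic order, deleting the single possibly-shared vertex $w_0$ leaves an arc of $r-1$ consecutive vertices, none of which lies on any other cycle of length $\geq 4$; in particular the asserted $r-2$ consecutive vertices have this property (and one in fact obtains $r-1$). The only delicate point is the depth bookkeeping in the previous paragraph, which rests entirely on $T$ being a genuine tree together with the at-most-one-vertex intersection property of distinct blocks; once these are in hand the conclusion is immediate.
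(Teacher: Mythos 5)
Your proof is correct, and it takes a genuinely different route from the paper's. The paper proceeds by induction on the number of blocks $\ba(G)$: it invokes Harary's theorem that the block graph $\Ba(G)$ is itself a block graph, picks a leaf block $B_s$ of $\Ba(G)$ (a fan of blocks $H_1\cup_v\cdots\cup_v H_l$ attached to the rest of $G$ through a single cut vertex $v$), and then either peels off a pendant clique $H_j$ and applies the induction hypothesis to the smaller graph, or, if some pendant $H_j$ fails to be complete, takes that $H_j$ as the desired cycle. You replace this induction by a one-shot extremal choice: root the block--cut tree at a long cycle and take a deepest cycle-block $C$ of length $\geq 4$; the parent/child depth bookkeeping then shows that at most one vertex of $C$ (its parent cut vertex $w_0$, if it exists) can lie on another cycle of length $\geq 4$. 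Your approach buys several things: no induction, no appeal to Harary's theorem (only the standard facts that the block--cut tree of a connected graph is a tree and that distinct blocks meet in at most one vertex), an explicit treatment of the disconnected case (which the paper glosses over), and a marginally stronger conclusion ($r-1$ consecutive good vertices rather than $r-2$, which the paper's terminal case also implicitly yields). What the paper's induction buys is mainly stylistic coherence, since the same block-graph machinery and vertex-peeling strategy reappear in the proof of the main theorem. One interpretive point, common to both arguments and worth making explicit: your identification of ``cycles of length $\geq 4$'' with cycle-blocks of length $\geq 4$ requires reading ``cycle'' as \emph{induced} cycle, since a clique block $K_m$ with $m\geq 4$ contains many non-induced $4$-cycles, and under the subgraph reading the lemma itself would be false; this induced-cycle convention is indeed what the paper intends, as its counting function $\ck(G)$ and the proof of the main theorem confirm.
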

\begin{proof}
We prove our assertion by induction on the number of blocks of $G$. If
$\ba(G)=1$, then we are through. Suppose now that $\ba(G)\geq 2$. It
follows from \cite[Theorem 3.5]{haray} that $\Ba(G)$ is block graph.
Set $\ba(\Ba(G))=s$. Let $B_1,\dots,B_s$ be the blocks of $\Ba(G)$ and
assume that $B_s$ is a leaf of $\Ba(G)$. There exist blocks $H_1,\dots,H_l$, for some $l\geq 2$, such that
$H=H_1\cup_{v}\dots\cup_{v}H_l$, for some $v \in V(G)$, is an induced
subgraph of $G$ corresponding to the block $B_s$.
Since $B_s$ is a leaf of $\Ba(G)$, it has only one cut-vertex. This
implies that there exists an $i\in [l]$
such that for any $j \neq i$, the vertices of $V(H_j),$
except $v$, are not part of any other blocks of $G$. 
If for some $j\neq i$, $H_j=K_m$,  then set $G'$ to be the induced
subgraph on $(V(G) \setminus V(H_j) ) \cup \{v\}$, i.e., the graph
obtained by removing the block $H_j$ from $G$.
Since
$\ba(G')=\ba(G)-1$, by induction there exists a cycle $C$ of
length $r(\geq 4)$ such that $r-2$ consecutive vertices of $C$
are not part of the other cycles of length $\geq 4$ in the graph
$G'$. Since $G'$ is an induced subgraph of $G$ and $V(C) \cap
V(H_j) \subseteq \{v\}$, we see that $C$ is a cycle in $G$ with
the required property. Suppose $H_j$ is not a complete graph for any
$j \neq i$, then taking $C = H_j$, we see that the
cycle $C$ satisfies the required property.
\end{proof}
We now prove the main theorem of this article, an upper bound for the
regularity of binomial edge ideal of a graph whose each block is
either a cycle or a clique.
\begin{theorem}\label{generalized-cactus-graph}
Let $G$ be a graph such that each block of $G$ is either a cycle or a clique. Then 
\[ \reg(S/J_G)\leq \ca'(G)+\sum_{k\geq 4}(k-2)\ck(G), \]
where $\ca'(G)$ is the number of maximal cliques except the edges of any
cycle of length $\geq 4$ in $G$
\end{theorem}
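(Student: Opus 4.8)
The plan is to induct on the quantity $N(G):=\sum_{k\ge 4}(k-2)\ck(G)$, which records the total cyclic complexity of $G$; note that the bound to be proved is exactly $\ca'(G)+N(G)$. For the base case $N(G)=0$ there are no cycles of length $\ge 4$, so every block of $G$ is a clique (triangles included), i.e. $G$ is a block graph and hence chordal. Here $\ca'(G)=\ca(G)$ and the claim collapses to the Saeedi Madani--Kiani inequality $\reg(S/J_G)\le \ca(G)$, which is already known for chordal (in particular block) graphs and cited in the introduction; this disposes of the base case.

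For the inductive step assume $N(G)\ge 1$, equivalently $\mathcal{C}(G)\ge 1$. I would first invoke Lemma \ref{tech-lemma} to produce a cycle $C$ of length $r\ge 4$ together with $r-2$ consecutive vertices $u_2,\dots,u_{r-1}$ of $C$ lying on no other cycle of length $\ge 4$. Choosing an internal vertex $v$ on this free arc (it is internal because on a cycle of length $\ge 4$ its two cycle-neighbours are non-adjacent), I would apply Ohtani's decomposition (Lemma \ref{ohtani-lemma}), $J_G=J_{G_v}\cap Q_v$ with $Q_v=(x_v,y_v)+J_{G\setminus v}$, and feed the associated short exact sequence
\[
0\to S/J_G\to S/J_{G_v}\oplus S/Q_v\to S/(J_{G_v}+Q_v)\to 0
\]
into Lemma \ref{regularity-lemma}(i), using $\reg(S/Q_v)=\reg(S/J_{G\setminus v})$, to get
\[
\reg(S/J_G)\le \max\{\reg(S/J_{G_v}),\ \reg(S/J_{G\setminus v}),\ \reg(S/(J_{G_v}+Q_v))+1\}.
\]

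The two outer terms are routine. In $G\setminus v$ the cycle $C$ is broken into a path, so $G\setminus v$ is again in the class with $N(G\setminus v)=N(G)-(r-2)$; a direct count shows the newly created bridges increase $\ca'$ by exactly $r-2$, so $\ca'(G\setminus v)+N(G\setminus v)=\ca'(G)+N(G)$, and the induction gives $\reg(S/J_{G\setminus v})\le \ca'(G)+N(G)$. Similarly $J_{G_v}+Q_v=(x_v,y_v)+J_H$, where $H$ is $G\setminus v$ with the neighbours of $v$ made into a clique; here the broken path closes up into a cycle of length $r-1$, so $H$ lies in the class with $N(H)=N(G)-1$ and $\ca'(H)+N(H)=\ca'(G)+N(G)-1$, whence $\reg(S/(J_{G_v}+Q_v))+1\le \ca'(G)+N(G)$ by induction.

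The genuine obstacle is the term $\reg(S/J_{G_v})$. Adding the chord $\{u_{j-1},u_{j+1}\}$ turns the block $C$ into a cycle of length $r-1$ carrying a triangle glued along the edge $\{u_{j-1},u_{j+1}\}$, so $G_v$ is \emph{not} in the class and is invisible to the primary induction. The key observation I would exploit is that $G_v$ is obtained from the in-class graph $H$ by re-attaching $v$ as a simplicial vertex on the edge $\{u_{j-1},u_{j+1}\}$; that is, $G_v=H\cup_e T$ is the clique sum of $H$ with a triangle $T$ along an edge. I would then show that such an edge-attachment of a triangle raises regularity by at most one, $\reg(S/J_{G_v})\le \reg(S/J_H)+1\le \ca'(G)+N(G)$, either by citing the relevant clique-sum/simplicial-attachment regularity bound or by extracting it from a further short exact sequence; since it reduces to the in-class graph $H$ with smaller $N$, the induction then closes. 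The remaining work is bookkeeping: verifying the $\ca'$-counts when $v$ carries attached cliques (so $\deg v>2$) and when $r=4$ (so the reduced cycle degenerates to a triangle). These are routine variants of the computations above, but keeping the $\ca'$-accounting exact across all cases is precisely where the argument must be handled with the most care.
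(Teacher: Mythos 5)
Your skeleton — induction on the cyclic weight, Lemma \ref{tech-lemma}, Ohtani's decomposition at a vertex $v$ of the free arc, and the three-term short exact sequence fed into Lemma \ref{regularity-lemma} — is the same as the paper's, and your handling of the terms $S/Q_v$ and $S/(J_{G_v}+Q_v)$ would be fine \emph{if} the relevant graphs stayed inside the class. But the step you yourself label ``the genuine obstacle'' is left genuinely unresolved, and it is precisely the hard content of the theorem. The asserted bound $\reg(S/J_{H\cup_e T})\le \reg(S_H/J_H)+1$ for a triangle $T$ glued along an edge $e$ is not a citable result: the only clique-sum regularity theorem available in this circle of papers (Theorem 3.1 of [JNR1]) applies to clique sums along a \emph{vertex}, not along an edge, and no general edge-clique-sum bound is known. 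Nor can it be ``extracted from a further short exact sequence'' in any routine way: applying Ohtani's lemma inside $H\cup_e T$ reproduces blocks that are cliques glued along edges of shorter cycles, i.e.\ graphs that are again outside the class, so the recursion never re-enters your induction. The paper escapes exactly this trap by strengthening the statement to the auxiliary class $H=G\cup_{\{v_1,v_2\}}K_m$, $m\ge 2$ (cliques glued along a cycle edge are made part of the induction hypothesis), and then running a nested induction on the cycle length $r$, whose base case $r=4$ requires a \emph{second} Ohtani decomposition (at $v_2$ inside $H_{v_1}$) before the cycle count drops and the outer induction on $\mathcal{C}(G)$ can be invoked. Without this strengthening, or an independent proof of your attachment lemma, your induction does not close.

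A second, related gap: the case $\deg_G(v)>2$ is not ``bookkeeping.'' Lemma \ref{tech-lemma} only guarantees that the $r-2$ consecutive vertices lie on no other cycle of length $\ge 4$; each of them may still be a cut vertex carrying cliques (take $C_4$ with a pendant edge at every vertex — every vertex of the free arc has degree $3$). In that situation $H=G_v\setminus v$ has a block of the form $C_{r-1}\cup_{\{u_{j-1},u_{j+1}\}}K_{n_v}$ with $n_v=\deg_G(v)$, which is neither a cycle nor a clique, so $H$ itself is outside the class and the induction hypothesis cannot be applied to it at all; moreover $G_v$ is then not ``$H$ plus a triangle'' but $H$ plus $K_{n_v+1}$ glued along an $n_v$-clique. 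Thus for such $v$ \emph{both} the $J_{G_v}$-term and the $(J_{G_v}+Q_v)$-term escape your induction, not merely their $\ca'$-counts. This is the second reason the paper proves its Claim for $G\cup_e K_m$ with arbitrary $m\ge 2$ rather than for $G$ alone. Your base case ($N(G)=0$, block graphs, via the known bound $\reg(S/J_G)\le \ca(G)$) and your accounting for $G\setminus v$ are correct, but the two issues above are exactly where the theorem's difficulty lives, and the proposal does not overcome them.
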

\begin{proof}
We prove our assertion by induction on $\mathcal{C}(G)$. If
$\mathcal{C}(G)=0$, then $G$ is a block graph, and hence the
assertion follows from \cite[Theorem 3.9]{EZ}. Assume that
$\mathcal{C}(G)\geq 1$. Then by Lemma \ref{tech-lemma}, there
exists a cycle $C$ of length $r\geq 4$ such that $r-2$ consecutive
vertices of $C$ are not part of any other cycles of length $\geq
4$ in $G$. Suppose $V(C)=\{v_1,\dots,v_r\}$ and
$v_1,\dots,v_{r-2}$ are not part of any other cycles of length
$\geq 4$ . Now, we bring in an intermediate class of graphs which is a
clique sum of $G$ with a complete graph $K_m$, for some $m \geq 2$.
Let $H=G\cup_{\{v_1,v_2\}} K_m$ for $m\geq 2$. Note that $H = G$ if $m
= 2$.
\vskip 2mm \noindent
\textbf{Claim:} $\reg(S_H/J_H)\leq \ca'(H)+\sum_{k\geq 4}(k-2)\ck(H).$
\vskip 2mm
We prove this claim for all $m \geq 2$ and finally deduce the required
assertion on $G$ by taking $m = 2$.
We proceed by induction on $r$. Suppose now that $r=4$. Since
$v_1$ is an internal vertex of $H$, by Lemma \ref{ohtani-lemma},
we can write $J_{H}=J_{H_{v_1}}\cap Q_{v_1}$ with
$Q_{v_1}=(x_{v_1},y_{v_1})+J_{H\setminus v_1}$, where $H\setminus v_1$
is a graph whose every block is either a cycle or a clique with $\mathcal{C}(H\setminus
v_1)=\mathcal{C}(G)-1$. Therefore, by induction and
\eqref{Bettiproduct}, 
\begin{align*}
\reg(S_H/Q_{v_1})\leq \ca'(H\setminus v_1)+\sum_{k\geq
	4}(k-2)\ck(H\setminus v_1)\leq \ca'(H)+\sum_{k\geq 4}(k-2)\ck(H),
\end{align*}
where the last inequality follows because $\ca'(H \setminus v_1)
\leq \ca'(H) + 2$. Note that $J_{H_{v_1}}+Q_{v_1}=(x_{v_1},y_{v_1})+J_{H_{v_1}\setminus v_1}$.
Let $n_{v_1}=|N_H(v_1)|, B'=C_3\cup _{\{v_2,v_4\}} K_{n_{v_1}+1}$ and $B''=C_3\cup _{\{v_2,v_4\}} K_{n_{v_1}}$. Then it can be seen
that the block containing $v_1$ in $H_{v_1}$ is $B'$ and all the
other blocks of $H_{v_1}$ are blocks of $H$. 
We show that 
\begin{eqnarray}
\reg(S_H/J_{H_{v_1}})& \leq & \ca'(H_{v_1})+\sum_{k\geq
4}(k-2)\ck(H_{v_1}) \text{ and } \label{eqn2}\\
\reg(S_H/((x_{v_1},y_{v_1})+J_{H_{v_1}\setminus v_1})) & \leq & 
\ca'(H_{v_1}\setminus v_1)+\sum_{k\geq 4}(k-2)\ck(H_{v_1}\setminus
v_1). \label{eqn3}
\end{eqnarray}

Let $H'=H_{v_1}$.
Since $v_2$ is an internal vertex in $H'$, by Lemma
\ref{ohtani-lemma}, $J_{H'}=J_{H'_{v_2}}\cap Q_{v_2}$ with
$Q_{v_2}=(x_{v_2},y_{v_2})+J_{H'\setminus v_2}$, where $H'\setminus
v_2$ is a graph such that each block of $H'\setminus v_2$ is either a cycle or a clique and $\mathcal{C}(H'\setminus
v_2)=\mathcal{C}(G)-1$. Therefore, by induction and
\eqref{Bettiproduct}, we have $$\reg(S_H/Q_{v_2})\leq
\ca'(H'\setminus v_2)+\sum_{k\geq 4}(k-2)\ck(H'\setminus v_2)\leq
\ca'(H')+\sum_{k\geq 4}(k-2)\ck(H'),$$ 
where the second inequality follows since $\ca'(H'\setminus v_2)\leq \ca'(H')$ and $\sum_{k\geq 4}(k-2)\ck(H'\setminus v_2)=\sum_{k\geq 4}(k-2)\ck(H').$
The block containing $v_2$ in $H'_{v_2}$ is $K_{n_{v_2}+1}$, where
$n_{v_2}=|N_{H'}(v_2)|$ and all other blocks of $H'_{v_2}$ are blocks
of $H'$. Therefore, $\mathcal{C}(H'_{v_2})=\mathcal{C}(G)-1$. By
induction, 
$$\reg(S_H/J_{H'_{v_2}})\leq \ca'(H'_{v_2})+\sum_{k\geq
4}(k-2)\ck(H'_{v_2})\leq \ca'(H')-1 + \sum_{k\geq 4}(k-2)\ck(H'),$$ 
where the second inequality follows since $\ca'(H'_{v_2})\leq \ca'(H')-1$
and $\sum_{k\geq 4}(k-2)\ck(H'_{v_2})=\sum_{k\geq
4}(k-2)\ck(H').$
Note that $J_{H'_{v_2}}+Q_{v_2}=(x_{v_2},y_{v_2})+J_{H'_{v_2}\setminus
v_2}$. Therefore, by \cite[Corollary 2.2]{MM} and
\eqref{Bettiproduct}, 
$$\reg(S_H/((x_{v_2},y_{v_2})+J_{H'_{v_2}\setminus v_2}))\leq
\reg(S_H/J_{H'_{v_2}}) \leq \ca'(H')-1+\sum_{k\geq
4}(k-2)\ck(H').$$
We consider the following short exact sequence:
\begin{align*}\label{ses-v2}
0\longrightarrow \frac{S_H}{J_{H'}}\longrightarrow
\frac{S_H}{J_{H'_{v_2}}}\oplus
\frac{S_H}{Q_{v_2}}\longrightarrow
\frac{S_H}{J_{H'_{v_2}}+Q_{v_2}} \longrightarrow 0.
\end{align*}
By applying Lemma \ref{regularity-lemma} on the above short exact
sequence, we get that
\[
\reg(S_H/J_{H'})\leq \ca'(H')+\sum_{k\geq 4}(k-2)\ck(H').
\]
Since $\ca'(H')\leq \ca'(H)+2$
and $\sum_{k\geq 4}(k-2)\ck(H')=\sum_{k\geq 4}(k-2)\ck(H)-2$, we have
$$\reg(S_H/J_{H'})\leq
\ca'(H)+\sum_{k\geq 4}(k-2)\ck(H).$$ 

\ \ \ \ \ \ \

Now we prove \eqref{eqn3}.
If $n_{v_1}=2$, then $H_{v_1}\setminus v_1$ is a graph whose blocks
are either a cycle or a clique with $\mathcal{C}(H_{v_1}\setminus
v_1)=\mathcal{C}(G)-1$. Therefore by induction and
\eqref{Bettiproduct},
$$\reg(S_H/((x_{v_1},y_{v_1})+J_{H_{v_1}\setminus v_1}))\leq
\ca'(H_{v_1}\setminus v_1)+\sum_{k\geq 4}(k-2)\ck(H_{v_1}\setminus
v_1).$$
Now assume that $n_{v_1}\geq 3$. Set $H''=H_{v_1}\setminus v_1$.
Replacing $H'$ by $H''$ in the proof of (\ref{eqn2}), 
and obtain
$$\reg(S_H/((x_{v_1},y_{v_1})+J_{H_{v_1}\setminus v_1}))\leq
\ca'(H_{v_1}\setminus v_1)+\sum_{k\geq 4}(k-2)\ck(H_{v_1}\setminus
v_1).$$ 
For
$H'\setminus v_1$, it can be observed that $\ca'(H'\setminus v_1)\leq
\ca'(H)+1$ and $\sum_{k\geq 4}(k-2)\ck(H')=\sum_{k\geq 4}(k-2)\ck(H)-2$. Therefore,
 $$\reg(S_H/((x_{v_1}, y_{v_1}) + J_{H'\setminus v_1})) \leq \ca'(H)-1+\sum_{k\geq 4}(k-2)\ck(H).$$
Now the claim follows from Lemma \ref{regularity-lemma} applied on the
short exact sequence:
\begin{align*}
0\longrightarrow \frac{S_H}{J_{H}}\longrightarrow
\frac{S_H}{J_{H'}}\oplus
\frac{S_H}{Q_{v_{1}}}\longrightarrow
\frac{S_H}{J_{H'}+Q_{v_{1}}} \longrightarrow 0.
\end{align*}
Assume that $r\geq 5$. Since
$v_1$ is an internal vertex of $H$, by Lemma \ref{ohtani-lemma},
we write $J_{H}=J_{H_{v_1}}\cap Q_{v_1}$ with
$Q_{v_1}=(x_{v_1},y_{v_1})+J_{H\setminus v_1}$, where $J_{H_{v_1}}+Q_{v_1}=(x_{v_1},y_{v_1})+J_{H_{v_1}\setminus v_1}$. Since $H\setminus v_1$
is a graph such that every block of $H\setminus v_1$ is either a cycle or a clique and $\mathcal{C}(H\setminus
v_1)=\mathcal{C}(G)-1$, by induction and
\eqref{Bettiproduct}, 
\begin{align*}
\reg(S_H/Q_{v_1})\leq \ca'(H\setminus v_1)+\sum_{k\geq
4}(k-2)\ck(H\setminus v_1)\leq \ca'(H)+\sum_{k\geq 4}(k-2)\ck(H),
\end{align*}
where the last inequality follows because $\sum_{k\geq
4}(k-2)\ck(H\setminus v_1)+r-2=\sum_{k\geq 4}(k-2)\ck(H)$ and $\ca'(H
\setminus v_1) \leq \ca'(H) + r-2$.
Let $n_{v_1}=|N_H(v_1)|, B'=C_{r-1}\cup _{\{v_2,v_r\}} K_{n_{v_1}+1}$
and $B''=C_{r-1}\cup _{\{v_2,v_r\}} K_{n_{v_1}}$. Then, the block
containing $v_1$ in $H_{v_1}$ is $B'$ and all the
other blocks of $H_{v_1}$ are blocks of $H$. Note that both the graphs
$H_{v_1}$ and $H_{v_1}\setminus v_1$ have a cycle of length $r-1$ whose
$r-3$ vertices are not part of any other cycles. Hence by induction on
$r$ and using \eqref{Bettiproduct}, we get
$$\reg(S_H/J_{H_{v_1}})\leq \ca'(H_{v_1})+\sum_{k\geq
4}(k-2)\ck(H_{v_1}) \text{ and }$$
$$\reg(S_H/((x_{v_1},y_{v_1})+J_{H_{v_1}\setminus v_1}))\leq
\ca'(H_{v_1}\setminus v_1)+\sum_{k\geq 4}(k-2)\ck(H_{v_1}
\setminus v_1).$$ 
Note also that $\sum_{k\geq
4}(k-2)\ck(H_{v_1})=\left[\sum_{k\geq
4}(k-2)\ck(H) \right]-1$ and $\ca'(H_{v_1})\leq \ca'(H)+1$. Therefore, $$\reg(S_H/J_{H_{v_1}})\leq \ca'(H)+\sum_{k\geq
4}(k-2)\ck(H).$$
For the graph $H_{v_1}\setminus v_1$, we notice that $\sum_{k\geq
4}(k-2)\ck(H_{v_1}\setminus v_1)=\left[\sum_{k\geq
4}(k-2)\ck(H)\right]-1$ and $\ca'(H_{v_1}\setminus v_1)\leq \ca'(H)$. Therefore, 
$$\reg(S_H/((x_{v_1},y_{v_1})+J_{H_{v_1}\setminus v_1}))\leq \ca'(H)+\left[\sum_{k\geq
4}(k-2)\ck(H)\right]-1.$$
Hence it follows from 
Lemma \ref{regularity-lemma} applied on the following short exact sequence 
\begin{align*}
0\longrightarrow \frac{S_H}{J_{H}}\longrightarrow
\frac{S_H}{J_{H_{v_1}}}\oplus
\frac{S_H}{Q_{v_1}}\longrightarrow
\frac{S_H}{J_{H_{v_1}}+Q_{v_1}} \longrightarrow 0.
\end{align*}
that $\reg(S_H/J_H) \leq \ca'(H) + \sum_{k\geq 4} (k-2)\ck(H)$. This
completes the proof of the Claim.
Hence, our assertion follows by taking $m=2$ in $H$.
\end{proof}
As an immediate consequence, we obtain an upper bound for the
regularity of cactus graph.
\begin{corollary}\label{cactus-graph}
Let $G$ be a cactus graph. Then 
\[\reg(S/J_G) \leq \ca'(G) + \sum_{k\geq 4} (k-2)\ck(G).\] 
\end{corollary}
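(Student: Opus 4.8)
The plan is straightforward: Corollary~\ref{cactus-graph} is an immediate specialization of Theorem~\ref{generalized-cactus-graph}, so the entire task is to verify that a cactus graph satisfies the hypothesis of the theorem and that the stated bound is literally the theorem's bound applied in this case. I would first recall the definition: $G$ is a cactus graph precisely when every block of $G$ is either a cycle or an edge. Since an edge is the complete graph $K_2$, every block of a cactus graph is either a cycle or a clique, which is exactly the hypothesis required in Theorem~\ref{generalized-cactus-graph}.

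Having checked the hypothesis, I would simply invoke Theorem~\ref{generalized-cactus-graph} to conclude
\[
\reg(S/J_G) \leq \ca'(G) + \sum_{k \geq 4}(k-2)\ck(G),
\]
where $\ca'(G)$ counts the maximal cliques of $G$ other than the edges belonging to any cycle of length $\geq 4$, and $\ck(G)$ counts the cycles of length $k$. Because the classes of graphs in the theorem and in the corollary are described by the same bound, no further manipulation of the right-hand side is needed; the expression transfers verbatim.

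There is essentially no obstacle here, since the corollary is a direct consequence of the more general theorem---this is precisely why it is stated as a corollary rather than proved independently. The only point that warrants a sentence of care is confirming that the edges of a cactus graph are legitimately counted as cliques (they are the blocks of type $K_2$), so that the cactus hypothesis genuinely implies the "each block is a cycle or a clique" hypothesis and the invariant $\ca'(G)$ is well defined in the cactus setting. Once this is noted, the proof is a one-line appeal to Theorem~\ref{generalized-cactus-graph}.

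\begin{proof}
Since $G$ is a cactus graph, every block of $G$ is either a cycle or an edge. As an edge is the complete graph $K_2$, every block of $G$ is either a cycle or a clique. Hence $G$ satisfies the hypothesis of Theorem~\ref{generalized-cactus-graph}, and the assertion follows.
\end{proof}
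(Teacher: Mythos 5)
Your proof is correct and matches the paper's (implicit) argument exactly: the paper also treats this as an immediate specialization of Theorem~\ref{generalized-cactus-graph}, since every block of a cactus graph is a cycle or an edge $K_2$, hence a cycle or a clique. Nothing further is needed.
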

\section{Regularity of Cohen-Macaulay Cactus Graph}
In this section we obtain a class of cactus graph for which the upper
bound we proved in the last section is attained. We begin by computing
the regularity of certain classes of graphs which are required in the
main theorem.
\begin{lemma}\label{tech-lemma1}
Let $k,m_1\geq 3$ and $m_2\geq 2$. Let $G = C_k \cup_e K_{m_1} \cup_v
K_{m_2}$ with $v \in e$. If $m_2=2$, then $\reg(S/J_G)=k-1$ and if
$m_2\geq 3 $, then $\reg(S/J_G)=k$.
\end{lemma}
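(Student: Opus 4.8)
The plan is to reduce the whole computation, via a single application of Ohtani's lemma (Lemma~\ref{ohtani-lemma}) at the cut vertex $v$, to the following auxiliary fact, which I would establish first: \emph{for all $j\geq 3$ and $M\geq 3$ one has $\reg(S_B/J_B)=j-1$, where $B=C_j\cup_e K_M$} (Claim~A). In the application only the values $M=m_1+m_2$ and $M=m_1+m_2-1$ occur, both of which are $\geq 3$.

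I would prove Claim~A by induction on $j$. Write $e=\{v_1,v_2\}$ with $C_j=v_1v_2\cdots v_jv_1$; since $v_1$ is internal, Lemma~\ref{ohtani-lemma} gives $J_B=J_{B_{v_1}}\cap Q_{v_1}$ with $Q_{v_1}=(x_{v_1},y_{v_1})+J_{B\setminus v_1}$. The combinatorial heart is to identify the three graphs involved: forcing $N_B(v_1)$ to be a clique inserts the chord $\{v_2,v_j\}$, so that $B_{v_1}=C_{j-1}\cup_{\{v_2,v_j\}}K_{M+1}$ and $B_{v_1}\setminus v_1=C_{j-1}\cup_{\{v_2,v_j\}}K_{M}$ are again of the same type with a shorter cycle, whereas $B\setminus v_1=P_{j-1}\cup_{v_2}K_{M-1}$ is a block graph. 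By the inductive hypothesis $\reg(S_B/J_{B_{v_1}})=j-2$, and since $J_{B_{v_1}}+Q_{v_1}=(x_{v_1},y_{v_1})+J_{B_{v_1}\setminus v_1}$, the formula~\eqref{Bettiproduct} gives $\reg(S_B/(J_{B_{v_1}}+Q_{v_1}))=j-2$ as well. For the block graph $B\setminus v_1$ the number of maximal cliques is $j-1$, so \cite[Theorem~3.9]{EZ} bounds its regularity above by $j-1$, while the induced path $u_1v_2v_3\cdots v_j$ of length $j-1$ gives the matching lower bound; hence $\reg(S_B/Q_{v_1})=j-1$. Applying Lemma~\ref{regularity-lemma} to $0\to S_B/J_B\to (S_B/J_{B_{v_1}})\oplus(S_B/Q_{v_1})\to S_B/(J_{B_{v_1}}+Q_{v_1})\to 0$, the middle module has regularity $\max\{j-2,j-1\}=j-1$, strictly larger than the $j-2$ of the right-hand module, so part~(ii) of that lemma yields $\reg(S_B/J_B)=j-1$. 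In the base case $j=3$ the reductions $B_{v_1}$ and $B_{v_1}\setminus v_1$ degenerate into the complete graphs $K_{M+1}$ and $K_M$ of regularity $1=j-2$, while $B\setminus v_1$ is still a block graph of regularity $2=j-1$, so the same sequence gives $\reg(S_B/J_B)=2$.

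Granting Claim~A, the Lemma follows from one more application of Lemma~\ref{ohtani-lemma}, at the cut vertex $v=v_1$. With $e=\{v_1,v_2\}$ and $C_k=v_1v_2\cdots v_kv_1$, the neighbourhood $N_G(v_1)$ consists of $v_2,v_k$, the remaining vertices of $K_{m_1}$, and the remaining vertices of $K_{m_2}$; forcing it to be a clique fuses $K_{m_1}$, $K_{m_2}$, and the new chord $\{v_2,v_k\}$ into a single clique, so that $G_{v_1}=C_{k-1}\cup_{\{v_2,v_k\}}K_{m_1+m_2}$ and $G_{v_1}\setminus v_1=C_{k-1}\cup_{\{v_2,v_k\}}K_{m_1+m_2-1}$. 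By Claim~A (and, when $k=3$, the direct observation that these collapse to complete graphs of regularity $1$) both have regularity $k-2$, whence $\reg(S/(J_{G_{v_1}}+Q_{v_1}))=k-2$ via $J_{G_{v_1}}+Q_{v_1}=(x_{v_1},y_{v_1})+J_{G_{v_1}\setminus v_1}$ and~\eqref{Bettiproduct}. On the other hand $G\setminus v_1$ is the disjoint union of the block graph $P_{k-1}\cup_{v_2}K_{m_1-1}$, of regularity $k-1$ exactly as above, and the clique $K_{m_2-1}$; so by~\eqref{Bettiproduct}, $\reg(S/Q_{v_1})=(k-1)+\reg(S_{K_{m_2-1}}/J_{K_{m_2-1}})$. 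This is the only point at which the hypothesis on $m_2$ matters: for $m_2=2$ the factor $K_{m_2-1}$ is a single vertex and contributes $0$, giving $k-1$, while for $m_2\geq3$ it contributes $1$, giving $k$.

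Finally I would feed these into the short exact sequence $0\to S/J_G\to (S/J_{G_{v_1}})\oplus(S/Q_{v_1})\to S/(J_{G_{v_1}}+Q_{v_1})\to 0$ attached to $J_G=J_{G_{v_1}}\cap Q_{v_1}$: its middle module has regularity $\max\{k-2,\reg(S/Q_{v_1})\}=\reg(S/Q_{v_1})$ and its right-hand module has regularity $k-2$. Since $\reg(S/Q_{v_1})\geq k-1>k-2$, Lemma~\ref{regularity-lemma}(ii) forces $\reg(S/J_G)=\reg(S/Q_{v_1})$, which is $k-1$ when $m_2=2$ and $k$ when $m_2\geq3$, as claimed. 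I expect the main obstacle to be Claim~A: getting the induction on the cycle length started requires verifying carefully that the $G_v$-operation turns $C_j$ into $C_{j-1}$ with an enlarged edge-clique while $G\setminus v$ becomes a block graph, and---crucially---checking at each stage the strict inequality between the regularities of the middle and right-hand terms of the sequence, which is exactly what allows Lemma~\ref{regularity-lemma}(ii) to promote the short exact sequence bound into an exact equality.
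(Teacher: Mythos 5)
Your proposal is correct, and its final reduction is the same one the paper uses: apply Ohtani's lemma (Lemma~\ref{ohtani-lemma}) at the cut vertex $v$, identify $G_v=C_{k-1}\cup_{\{v_2,v_k\}}K_{m_1+m_2}$, $G_v\setminus v=C_{k-1}\cup_{\{v_2,v_k\}}K_{m_1+m_2-1}$ and $G\setminus v=\bigl(P_{k-1}\cup_{v_2}K_{m_1-1}\bigr)\sqcup K_{m_2-1}$, and then run Lemma~\ref{regularity-lemma}(ii) on the short exact sequence $0\to S/J_G\to S/J_{G_v}\oplus S/Q_v\to S/(J_{G_v}+Q_v)\to 0$. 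The differences are in how the inputs are obtained. First, where the paper simply cites \cite[Theorem 3.12]{JAR2} for $\reg$ of $C_{j}\cup_e K_M$ being $j-1$, you prove this from scratch (your Claim~A) by an induction on the cycle length, again via Ohtani's lemma; your identifications $B_{v_1}=C_{j-1}\cup_{\{v_2,v_j\}}K_{M+1}$, $B_{v_1}\setminus v_1=C_{j-1}\cup_{\{v_2,v_j\}}K_M$, $B\setminus v_1=P_{j-1}\cup_{v_2}K_{M-1}$ are right, and the strict inequality $j-1>j-2$ needed for Lemma~\ref{regularity-lemma}(ii) holds at every stage, so the induction closes. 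Second, where the paper disposes of the case $m_2=2$ by citing the proof of \cite[Theorem 4.1]{Rajib} and only runs the exact-sequence argument for $m_2\geq 3$, your computation treats both cases uniformly, the hypothesis on $m_2$ entering only through the factor $\reg(S_{K_{m_2-1}}/J_{K_{m_2-1}})\in\{0,1\}$ in $\reg(S/Q_v)$; this is a cleaner and more self-contained treatment. One small point to make explicit: the lower bound $\reg\geq j-1$ for the block graph $P_{j-1}\cup_{v_2}K_{M-1}$, which you justify by exhibiting an induced path, tacitly uses the Matsuda--Murai induced-subgraph inequality \cite[Corollary 2.2]{MM}; alternatively, since $v_2$ is simplicial in both $P_{j-1}$ and $K_{M-1}$, this graph is decomposable and \cite[Theorem 3.1]{JNR1} gives the exact value $(j-2)+1=j-1$ at once, which is the route most consistent with the tools the paper already invokes.
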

\begin{proof}
Let $N_{C_k}(v)=\{u,w\}$ and $e=\{u,v\}$. If $m_2=2$, then the
assertion follows from the proof \cite[Theorem 4.1]{Rajib}.
Suppose now that $m_2\geq 3$. Then using Lemma
\ref{ohtani-lemma}, we get that $J_{G}=J_{G_v}\cap Q_v$ with
$Q_v=(x_v,y_v)+J_{G\setminus v}$, where $G\setminus v$ is the
graph with two components $K_{m_1-1}\cup_u P_{k-1}$
and $K_{m_2-1}$. By \cite[Theorem 3.1]{JNR1} and
\eqref{Bettiproduct}, $\reg(S/Q_v)=k$. Note that
$G_v=C_{k-1}\cup_{e'}K_{m_1+m_2}$, where $e'=\{u,w\}$. Also,
$J_{G_v}+Q_v=(x_v,y_v)+J_H$, where $H$ is obtained by deleting the
vertex $v$ from $G_v$, i.e., $H=C_{k-1}\cup_{e'}K_{m_1+m_2-1}$.
By \cite[Theorem 3.12]{JAR2} and \eqref{Bettiproduct}, we have
$\reg(S/J_{G_v})=\reg(S/(J_{G_v}+Q_v))=k-2$. As $v$ is not a
simplicial vertex, we consider the following short exact sequence:
\begin{align}\label{rinaldo-ses}
0\longrightarrow \frac{S}{J_{G}}\longrightarrow  \frac{S}{J_{{G_v}}}\oplus \frac{S}{Q_v} \longrightarrow \frac{S}{J_{G_v}+Q_v} \longrightarrow 0.
\end{align}
Therefore, it follows from Lemma \ref{regularity-lemma} and the above short exact sequence that $\reg(S/J_G)=k$.
\end{proof}
\begin{lemma}\label{tech-lemma2}
Let $k\geq 4$ and $m_1,m_2\geq 2$. Let $G = C_k \cup_u K_{m_1} \cup_v
K_{m_2}$ for some $\{u, v\} \in E(C_k)$. If $m_1=m_2=2$, then
$\reg(S/J_G)=k-1$, otherwise $\reg(S/J_G)=k$.
\end{lemma}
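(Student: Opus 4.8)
The plan is to mimic the Ohtani decomposition used in Lemma~\ref{tech-lemma1} and then feed that lemma back in, after observing that the two auxiliary graphs it produces again lie in the family treated there. Since reflecting $C_k$ across the edge $\{u,v\}$ gives an isomorphism $C_k\cup_u K_{m_1}\cup_v K_{m_2}\cong C_k\cup_u K_{m_2}\cup_v K_{m_1}$, the invariant $\reg(S/J_G)$ is symmetric in $m_1,m_2$, so I may assume $m_2\ge m_1$. Write $N_{C_k}(v)=\{u,w\}$; as $k\ge 4$ the vertex $v$ lies on the two distinct cycle edges $\{u,v\}$ and $\{v,w\}$ and on $K_{m_2}$, hence is internal, and Lemma~\ref{ohtani-lemma} gives $J_G=J_{G_v}\cap Q_v$ with $Q_v=(x_v,y_v)+J_{G\setminus v}$ and $J_{G_v}+Q_v=(x_v,y_v)+J_{G_v\setminus v}$. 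I will read off the three regularities attached to the short exact sequence
\[
0\longrightarrow \frac{S}{J_G}\longrightarrow \frac{S}{J_{G_v}}\oplus\frac{S}{Q_v}\longrightarrow \frac{S}{J_{G_v}+Q_v}\longrightarrow 0.
\]

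For $Q_v$, the graph $G\setminus v$ is the disjoint union of $P_{k-1}\cup_u K_{m_1}$ (the clique $K_{m_1}$ at $u\neq v$ is untouched and $u$ becomes an endpoint of the path) and the isolated clique $K_{m_2-1}$. By \cite[Theorem 3.1]{JNR1} the first component has regularity $k-1$, while $\reg(S/J_{K_{m_2-1}})$ equals $0$ if $m_2=2$ and $1$ if $m_2\ge 3$; hence by \eqref{Bettiproduct}, $\reg(S/Q_v)=k-1$ when $m_2=2$ and $\reg(S/Q_v)=k$ when $m_2\ge 3$. The crucial step is identifying the other two graphs: filling in $N_G(v)$ merges $\{v\}\cup N_G(v)$ into a single clique $K_{m_2+2}$, and the new edge $e'=\{u,w\}$ shortens the cycle to $C_{k-1}$, while $K_{m_1}$ still hangs off $u$. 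Thus $G_v=C_{k-1}\cup_{e'}K_{m_2+2}\cup_u K_{m_1}$ and $G_v\setminus v=C_{k-1}\cup_{e'}K_{m_2+1}\cup_u K_{m_1}$, with $u\in e'$. Since $k-1\ge 3$, $m_2+1,m_2+2\ge 3$ and $m_1\ge 2$, both graphs satisfy the hypotheses of Lemma~\ref{tech-lemma1}, whose vertex-clique parameter is now $m_1$; together with \eqref{Bettiproduct} it gives both $\reg(S/J_{G_v})$ and $\reg(S/(J_{G_v}+Q_v))$ equal to $k-2$ if $m_1=2$ and to $k-1$ if $m_1\ge 3$.

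Finally I assemble. Under the assumption $m_2\ge m_1$ the summand $S/Q_v$ strictly dominates: if $m_1=m_2=2$ the three regularities are $k-1,\,k-2,\,k-2$, while if $m_2\ge 3$ they are $k,\,\le k-1,\,\le k-1$. In either case the middle term $N$ of the sequence satisfies $\reg N=\reg(S/Q_v)>\reg(S/(J_{G_v}+Q_v))=\reg P$, so Lemma~\ref{regularity-lemma}(ii) yields $\reg(S/J_G)=\reg(S/Q_v)$, which equals $k-1$ precisely when $m_1=m_2=2$ and $k$ otherwise. I expect the main obstacle to be the geometric bookkeeping of the middle paragraph: one must check carefully that collapsing $N_G(v)$ produces exactly one clique glued to $C_{k-1}$ along $\{u,w\}$ with $K_{m_1}$ still attached at the endpoint $u$, so that Lemma~\ref{tech-lemma1} genuinely applies in every subcase---including the boundary $k=4$, where $C_{k-1}=K_3$, and the degenerate cases $m_1=2$ or $m_2=2$, where a clique reduces to a single edge.
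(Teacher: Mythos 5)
Your proposal is correct and, in its main case, follows essentially the same route as the paper: Ohtani's Lemma \ref{ohtani-lemma} at $v$, the identifications $G_v=C_{k-1}\cup_{e'}K_{m_2+2}\cup_u K_{m_1}$ and $G_v\setminus v=C_{k-1}\cup_{e'}K_{m_2+1}\cup_u K_{m_1}$, the computation $\reg(S/Q_v)$ via \cite[Theorem 3.1]{JNR1} and \eqref{Bettiproduct}, Lemma \ref{tech-lemma1} for the other two terms, and Lemma \ref{regularity-lemma}(ii) applied to the short exact sequence.

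The one genuine difference is the case $m_1=m_2=2$: the paper disposes of it by citing \cite[Proposition 3.13]{JAR2}, whereas you run it through the same decomposition, getting regularities $k-1$, $k-2$, $k-2$ for the three terms and concluding uniformly. This is a small but real gain in self-containedness, and it works precisely because of your reduction to $m_2\geq m_1$: without it, the subcase $m_2=2<m_1$ would give $\reg(S/Q_v)=k-1=\reg(S/(J_{G_v}+Q_v))$, so $\reg N=\reg P$ and Lemma \ref{regularity-lemma}(ii) would no longer yield equality. Your symmetry observation (which the paper uses implicitly when it assumes $m_2\geq 3$ in the nontrivial case) is exactly what closes that gap, and your checks at the boundary $k=4$ (where $C_{k-1}=C_3$, still covered by Lemma \ref{tech-lemma1} since it allows $k\geq 3$) are also sound.
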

\begin{proof}
If $m_1=m_2=2$, then the assertion follows from \cite[Proposition 3.13]{JAR2}. Suppose $m_1\geq 3$ or $m_2\geq 3$. We assume that $m_2\geq 3$. Let $G=C_k\cup_u K_{m_1}\cup_v K_{m_2}$, $m_1\geq 2$, $m_2\geq 3$ and $N_{C_k}(v)=\{u,w\}$. Note that $G_v=C_{k-1}\cup _e K_{m_2+2}\cup_u K_{m_1}$, where $e=\{u,w\}$ and $Q_v=(x_v,y_v)+J_{K_{m_2-1}}+J_{H}$, where $H=K_{m_1}\cup_u P_{k-1}$. Also, $J_{G_v}+Q_v=(x_v,y_v)+J_{H'}$, where $H'$ is obtained by deleting the vertex $v$ from $G_v$, i.e., $H'=C_{k-1}\cup_e K_{m_2+1}\cup_u K_{m_1}$. By \cite[Theorem 3.1]{JNR1} and \eqref{Bettiproduct}, $\reg(S/Q_v)=k$. By Lemma \ref{tech-lemma1}, we get that if $m_1=2$, then $\reg(S/J_{G_v})=\reg(S/(J_{G_v}+Q_v))=k-2$, otherwise $\reg(S/J_{G_v})=\reg(S/(J_{G_v}+Q_v))=k-1$. Hence, the assertion follows from Lemma \ref{regularity-lemma} and the short exact sequence \eqref{rinaldo-ses}.
\end{proof}
A graph $G$ is said to be a \textit{decomposable graph} if $G$ can be written
as a clique sum of two subgraphs along a simplicial vertex i.e.,
$G=G_1\cup_v G_2$, where $v$ is a simplicial vertex of $G_1$ and
$G_2$. If $G$ is not decomposable, then it is called an \textit{indecomposable
graph}. It follows from \cite[Theorem 3.1]{JNR1} that
to find the regularity, it is
enough to consider $G$ to be an indecomposable graph. So, for the rest
of the section, we assume that $G$ is an indecomposable graph.
We now study the regularity of binomial edge ideal of Cohen-Macaulay
cactus graphs. Let $G$ be a graph such that $\Ba(G)$ is a path of length
$l-1$. Let $V(\Ba(G))=\{B_1,\dots,B_l\}$. If $B_i$ is a graph on $m_i$
vertices, then set $V(B_i)=\{v_{i1},\dots,v_{im_i}\}$ and $B_i\cap
B_{i+1}=\{w_i\}$. Also, we choose the order of vertices in $V(B_i)$ in
such a way that $v_{im_i}=w_i=v_{i+11}$.

In \cite[Theorem 2.2]{Rinaldo-Cactus}, Rinaldo characterized Cohen-Macaulay cactus graph. Let $G$ be an indecomposable Cohen-Macaulay cactus graph whose blocks are $B_1,\cdots,B_l$. Then it follows from \cite[Lemma 2.3]{Rinaldo-Cactus} that either $G\in \{K_2,C_3\}$ or $G$ satisfies the following conditions:
\begin{enumerate}
	\item $B_1,B_l\in \{C_3,K_2\}$,
	\item $B_2=B_{l-1}=C_4$,
	\item $B_i\in \{C_3,C_4\}$ for $3\leq i\leq l-2$ and if $B_i=C_3$ then $B_{i+1}=C_4$, and
	\item there are exactly two cut points in $C_4$ and they are adjacent.
\end{enumerate}
Our goal in this section is to compute the regularity of
binomial edge ideals of such classes of graphs. We compute the
regularity of a slightly more general class of graphs.
\begin{theorem}\label{CM-generalized-cactus-graph}
Let $G$ be a graph such that $\Ba(G)$ is a path of length $l-1$ for some
$l\geq 3$. Also let $B_1=K_{m_1},B_l=K_{m_l},B_2=B_{l-1}=C_4$ with
$m_1\geq 2,m_l\geq 3$ and $B_i\in \{C_4,K_{m_j}: m_j\geq 3\}$ for
$3\leq i\leq l-2$. Further assume that there are exactly two cut
points in each $C_4$ in $G$ and they are adjacent. Then $\reg(S/J_G)=2\cab(G)+\ca'(G)$, where $\ca'(G)$ is the number
of maximal cliques except the edges of $C_4$ in $G$.
\end{theorem}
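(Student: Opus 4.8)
The plan is to show that the upper bound already furnished by Theorem~\ref{generalized-cactus-graph} is attained. Since every cycle block of $G$ is a $C_4$, one has $\sum_{k\geq 4}(k-2)\ck(G)=2\cab(G)$, so Theorem~\ref{generalized-cactus-graph} gives $\reg(S/J_G)\leq 2\cab(G)+\ca'(G)$ immediately; the task reduces to proving the reverse inequality, which I would establish by induction on $l$. For $l=3$ the graph is $K_{m_1}\cup_{w_1}C_4\cup_{w_2}K_{m_l}$ with the cut vertices $w_1,w_2$ adjacent on the $C_4$, and since $m_l\geq 3$ this is exactly the nontrivial case of Lemma~\ref{tech-lemma2} with $k=4$; it yields $\reg(S/J_G)=4=2\cab(G)+\ca'(G)$, because here $\cab(G)=1$ and $\ca'(G)=2$.

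For the inductive step I would localize at the cut vertex $w_{l-1}$ joining the terminal $C_4$ block $B_{l-1}$ to the end clique $B_l=K_{m_l}$. As $w_{l-1}$ is internal, Ohtani's Lemma~\ref{ohtani-lemma} gives $J_G=J_{G_{w_{l-1}}}\cap Q_{w_{l-1}}$ with $Q_{w_{l-1}}=(x_{w_{l-1}},y_{w_{l-1}})+J_{G\setminus w_{l-1}}$ and $J_{G_{w_{l-1}}}+Q_{w_{l-1}}=(x_{w_{l-1}},y_{w_{l-1}})+J_{G_{w_{l-1}}\setminus w_{l-1}}$, which I would feed into
\begin{align*}
0\longrightarrow \frac{S}{J_G}\longrightarrow \frac{S}{J_{G_{w_{l-1}}}}\oplus\frac{S}{Q_{w_{l-1}}}\longrightarrow \frac{S}{J_{G_{w_{l-1}}}+Q_{w_{l-1}}}\longrightarrow 0.
\end{align*}
The key is to pin down $S/Q_{w_{l-1}}$ as the dominant summand. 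Deleting $w_{l-1}$ separates $B_l$ into a disjoint $K_{m_l-1}$ and breaks the terminal $C_4$ into a length-two pendant path hanging at $w_{l-2}$; thus $G\setminus w_{l-1}$ is the disjoint union of $K_{m_l-1}$ with a graph $G'$ carrying one fewer $C_4$. Using the Betti product formula~\eqref{Bettiproduct} and the induction hypothesis on $G'$, I would compute $\reg(S/Q_{w_{l-1}})=\reg(S/J_{K_{m_l-1}})+\reg(S/J_{G'})=1+\big(2\cab(G)+\ca'(G)-1\big)=2\cab(G)+\ca'(G)$, where the bookkeeping $\cab(G')=\cab(G)-1$ and $\ca'(G')=\ca'(G)+1$ (the two pendant-path edges now count toward $\ca'$) produces the required value.

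It then remains to confront the other two terms with the upper bound. By Theorem~\ref{generalized-cactus-graph} applied to $G_{w_{l-1}}$, which still has all blocks cycles or cliques but one fewer $C_4$, one gets $\reg(S/J_{G_{w_{l-1}}})\leq 2\cab(G)+\ca'(G)$, so the middle module $N$ has regularity exactly $2\cab(G)+\ca'(G)$. For the right-hand module $P=S/(J_{G_{w_{l-1}}}+Q_{w_{l-1}})$, the identity above gives $\reg(P)=\reg(S/J_{G_{w_{l-1}}\setminus w_{l-1}})$, and I would show this is \emph{strictly} below $2\cab(G)+\ca'(G)$ by tracking how adding the chord at $w_{l-1}$ turns the terminal $C_4$ into a triangle glued to the enlarged clique $K_{m_l+1}$, so that $\cab$ drops by one while $\ca'$ rises by at most one. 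With $\reg(N)>\reg(P)$ established, Lemma~\ref{regularity-lemma}(ii) forces $\reg(S/J_G)=\reg(N)=2\cab(G)+\ca'(G)$.

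The main obstacle is the bookkeeping in the inductive step. The graph $G'$ obtained after deleting $w_{l-1}$ carries a pendant path where the $C_4$ used to stand and so is not literally of the form assumed in the theorem, so I would either strengthen the inductive statement to allow such pendant paths---verifying that each added path edge raises $\ca'$ by exactly the amount needed to absorb the extra $+1$ coming from the freed vertex---or isolate a companion lemma computing $\reg(S/J_{G_0\cup_v P_3})$ from $\reg(S/J_{G_0})$ via \cite[Theorem 3.1]{JNR1}. The second delicate point is the strict inequality $\reg(S/(J_{G_{w_{l-1}}}+Q_{w_{l-1}}))<2\cab(G)+\ca'(G)$: this hinges on verifying that the triangle-plus-clique configuration created at the end of $G_{w_{l-1}}\setminus w_{l-1}$ genuinely lowers the regularity, and it is precisely the exact computations of Lemmas~\ref{tech-lemma1} and~\ref{tech-lemma2} for ``cycle-plus-cliques'' graphs that make this verification tractable.
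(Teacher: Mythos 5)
Your reduction to the lower bound via Theorem~\ref{generalized-cactus-graph} and your base case are fine, but the inductive step has a fatal gap: the identity $\reg(S/Q_{w_{l-1}})=2\cab(G)+\ca'(G)$ is false when $m_1=2$, a case allowed by the hypotheses (and essential for Corollary~\ref{CM-cactus-graph}, where $B_1$ may be $K_2$). Concretely, take $l=4$ and $G=K_2\cup_{w_1}C_4\cup_{w_2}C_4\cup_{w_3}K_{m_4}$ with $m_4\geq 3$, so $r:=2\cab(G)+\ca'(G)=6$. Deleting $w_3$ gives $G\setminus w_3=K_{m_4-1}\sqcup G'$ with $G'=K_2\cup_{w_1}C_4\cup_{w_2}P_3$. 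Since $G'$ is decomposable along the middle vertex of the pendant path, \cite[Theorem 3.1]{JNR1} and Lemma~\ref{tech-lemma2} (the case $m_1=m_2=2$, regularity $k-1=3$) give $\reg(S_{G'}/J_{G'})=3+1=4$, whereas your strengthened formula predicts $2\cab(G')+\ca'(G')=2+3=5$. Hence $\reg(S/Q_{w_3})=1+4=5=r-1$, not $r$: the module $S/Q_{w_{l-1}}$ is not the dominant summand, the middle term of your exact sequence can no longer be pinned at regularity $r$, and Lemma~\ref{regularity-lemma}(ii) yields nothing. In this case the value $r$ must be detected inside $S/J_{G_{w_{l-1}}}$, and your proposal contains no mechanism for proving a \emph{lower} bound on that term---the induction hypothesis does not apply to it, and upper bounds cannot substitute. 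This is precisely where the paper invests its effort: it localizes at the non-cut vertex $v_{23}$ of $B_2$, decomposes $G_{v_{23}}$ as $K_{m_1}\cup_{v_{21}}H$ with $H$ headed by a diamond, and proves $\reg(S_H/J_H)\geq r-1$ using uniqueness of extremal Betti numbers of the Cohen--Macaulay pieces (\cite[Theorem 2.1]{Rinaldo-Cactus}) and the long exact sequence of Tor. Some such lower-bound device is indispensable; the $m_1=2$ example shows your strategy cannot be repaired by bookkeeping alone, since $\reg(S/Q_{w_{l-1}})$ genuinely falls below $r$ there.

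A second, independent error: your claim that $G_{w_{l-1}}$ ``still has all blocks cycles or cliques'' is false, so Theorem~\ref{generalized-cactus-graph} cannot be applied to it. Completing $N_G(w_{l-1})$ merges $B_l$ with the two cycle-neighbours of $w_{l-1}$ into a clique $K_{m_l+2}$, while the fourth vertex of the terminal $C_4$ stays attached to two vertices of that clique; the resulting block is a clique glued to a triangle along an edge, which is neither a cycle nor a clique (your own later description of $G_{w_{l-1}}\setminus w_{l-1}$ as ``a triangle glued to the enlarged clique'' contradicts the earlier claim). One could still bound its regularity by an argument in the spirit of Lemma~\ref{tech-lemma1} with $k=3$, but that is an additional proof you have not supplied, and the same issue affects your unverified strict inequality for $S/(J_{G_{w_{l-1}}}+Q_{w_{l-1}})$. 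In short, the choice of the cut vertex $w_{l-1}$ (rather than the paper's $v_{23}$), the false strengthened induction, and the absence of any lower-bound mechanism leave the proposal unsound as it stands.
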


\begin{proof}
We proceed by induction on $l$. If $l=3$, then $G=K_{m_1}\cup_
{v_{21}}C_4 \cup_{v_{24}} K_{m_3}$, and hence, the assertion
follows from Lemma \ref{tech-lemma2} considering $k=4$. Assume
that $l\geq 4$. Since $v_{23}$ is an internal vertex, by
Lemma \ref{ohtani-lemma}, we get the following short exact
sequence:
\begin{align}\label{ses-v23}
0\longrightarrow \frac{S}{J_{G}}\longrightarrow
\frac{S}{J_{{G_{v_{23}}}}}\oplus \frac{S}{Q_{v_{23}}}
\longrightarrow \frac{S}{J_{G_{v_{23}}}+Q_{v_{23}}}
\longrightarrow 0.
\end{align}
Set $r=2\cab(G)+\ca'(G)$. First, we show that $\reg(S/J_{G_{v_{23}}})=r$.
Note that $E(G_{v_{23}})=E(G)\cup \{ \{v_{22},v_{24}\}\}$, and so the
second block in $G_{v_{23}}$ is a diamond graph $D$. Except for the
second block, all the blocks of $G$ and $G_{v_{23}}$ are the
same.  Here, $G_{v_{23}}$ is a decomposable graph into
indecomposable graphs $K_{m_1}$ and $H$ along $v_{21}$, where
$H=D\cup_{w_2} B_3\cup_{w_3} \dots \cup_{w_{l-1}} B_l$. Now to
find the regularity of $J_{G_{v_{23}}}$, it is enough to find the
regularity of $J_H$.
\vskip 1mm
\noindent
\textbf{Claim:} $\reg(S_H/J_H)=r-1$.\\
Since $v_{22}$ is an internal vertex of $H$, by Lemma
\ref{ohtani-lemma}, we get the following exact sequence:
\begin{align}\label{ses-v22}
0\longrightarrow \frac{S_H}{J_{H}}\longrightarrow
\frac{S_H}{J_{H_{v_{22}}}}\oplus
\frac{S_H}{Q_{v_{22}}}\longrightarrow
\frac{S_H}{J_{H_{v_{22}}}+Q_{v_{22}}} \longrightarrow 0.
\end{align}
Note that $H_{v_{22}}=K_4\cup_{w_2} B_3\cup_{w_3} \dots \cup_{w_{l-1}}
B_l$. Since $\Ba(H_{v_{22}})$ is of length $l-1$ and $H_{v_{22}}$
satisfies induction hypotheses,  we have
$\reg(S_H/J_{H_{v_{22}}})=2\cab(H_{v_{22}})+\ca'(H_{v_{22}})=r-2$. It follows from \cite[Proposition
2.3]{Rinaldo-Cactus} that if $T\in
\mathscr{C}(H)$ and $v_{22} \in T$, then $v_{24} \in T$. So,
$Q_{v_{22}}=(x_{v_{22}},y_{v_{22}},x_{v_{24}},y_{v_{24}})+J_{H\setminus
\{v_{22},v_{24}\}}$ and
$J_{H_{v_{22}}}+Q_{v_{22}}=(x_{v_{22}},y_{v_{22}},x_{v_{24}},y_{v_{24}})+
J_{H\setminus
\{v_{22},v_{24}\}}+(x_{v_{21}}y_{v_{23}}-x_{v_{23}}y_{v_{21}})$. It can be seen that
$H\setminus \{v_{22},v_{24}\}=B_3'\cup_{w_3} \dots \cup_{w_{l-1}}
B_l\cup \{v_{21},v_{23}
\}$, where $B_3'$ is obtained by deleting the vertex $w_2=v_{24}$
from $B_3$. 
\vskip 1mm
\noindent
\textbf{Case 1.} If $B_3=K_{m_3}$, then $H\setminus
\{v_{22},v_{24}\}=K_{m_3-1}\cup_{w_3} \dots \cup_{w_{l-1}} B_l
\cup \{v_{21}, v_{23}\}$. Hence, by induction and \eqref{Bettiproduct},
$\reg(S_H/Q_{v_{22}})=r-3$ and
$\reg(S_H/(J_{H_{v_{22}}}+Q_{v_{22}}))=r-2$. 
\vskip 1mm
\noindent
\textbf{Case 2.} If $B_3=C_4$, then $H\setminus
\{v_{22},v_{24}\}=P_3\cup_{w_3} \dots \cup_{w_{l-1}} B_l \cup
\{v_{21}, v_{23}\}$.
Therefore, by \cite[Theorem 3.1]{JNR1}, induction and
\eqref{Bettiproduct}, $\reg(S_H/Q_{v_{22}})=r-3$ and
$\reg(S_H/(J_{H_{v_{22}}}+Q_{v_{22}}))=r-2$. Hence it follows
from Lemma \ref{regularity-lemma} and the short exact sequence
\eqref{ses-v22} that $\reg(S_H/J_H) \leq r-1$.

Now we prove that $r-1 \leq \reg(S_H/J_H)$.
Set $|V(H)|=n$. By \cite[Theorem 2.1]{Rinaldo-Cactus},
$S_H/J_{H_{v_{22}}}$, $S_H/J_{Q_{v_{22}}}$ and
$S_H/(J_{H_{v_{22}}}+Q_{v_{22}})$ are Cohen-Macaulay. Therefore,
$\beta_{n-1,n-1+r-2}(S_H/J_{H_{v_{22}}})$,
$\beta_{n-1,n-1+r-3}(S_H/J_{Q_{v_{22}}})$ and
$\beta_{n,n+r-2}(S_H/(J_{H_{v_{22}}}+Q_{v_{22}}))$ are the unique
extremal Betti numbers. We consider the long exact sequence of Tor
corresponding to the short exact sequence \eqref{ses-v22} in
homological degree $n$ and in graded degree $n+r-2$:
\begin{align*}
0 \longrightarrow \Tor_{n}^{S_H}\left(\frac{S_H}{J_{H_{v_{22}}}+Q_{v_{22}}},\K\right)_{n+r-2} \longrightarrow \Tor_{n-1}^{S_H}\left( \frac{S_H}{J_H},\K\right)_{n+r-2}\longrightarrow \cdots
\end{align*}
which implies that $\beta_{n-1,n+r-2}(S_H/J_H)\neq 0$ and hence, $r-1
\leq \reg(S_H/J_H)$. Therefore, $\reg(S_H/J_H)=r-1$.
By \cite[Theorem 3.1]{JNR1} and the claim, we get that $\reg(S/J_{G_{v_{23}}})=r$. 

Now we show that $\reg(S/Q_{v_{23}})\leq r-2$ and
$\reg(S/(Q_{v_{23}}+J_{G_{v_{23}}}))\leq r-1$. It follows from
\cite[Proposition 2.3]{Rinaldo-Cactus} that if $T\in \mathscr{C}(G)$
and $v_{23} \in T$, then $v_{21} \in T$ and $v_{24} \notin T$. Thus,
$Q_{v_{23}}=(x_{v_{21}},y_{v_{21}},x_{v_{23}},y_{v_{23}})+J_{K_{m_1-1}}+J_{H'}$,
where $H'=(B_3\cup_{w_3}B_4\cup_{w_4} \dots \cup_{w_{l-1}}
B_l)_{v_{24}}=(B_3)_{v_{24}}\cup_{w_3}B_4\cup_{w_4} \dots
\cup_{w_{l-1}} B_l$.
\vskip 1mm
\noindent
\textbf{Case 1.} If $B_3=K_{m_3}$, then $(B_3)_{v_{24}}=B_3$.
Therefore, $H'$ satisfies the induction hypotheses and hence,
$\reg(S_{H'}/J_{H'})=r-3$.
\vskip 1mm
\noindent
\textbf{Case 2.} If $B_3=C_4$, then $(B_3)_{v_{24}}=D$. So,
$H'=D\cup_{w_3}B_4\cup_{w_4} \dots \cup_{w_{l-1}} B_l$. It follows
from the Claim that $\reg(S_{H'}/J_{H'})=r-3$. 

\vskip 1mm
Hence, in either case, $\reg(S/Q_{v_{23}})\leq r-2$. Also
$J_{G_{v_{23}}}+Q_{v_{23}}=(x_{v_{21}},y_{v_{21}},x_{v_{23}},y_{v_{23}})+J_{K_{m_1-1}}+J_{H''}$,
where $E(H'')=E(H')\cup \{\{v_{22},v_{24}\}\}$. Therefore, by
\cite[Theorem 3.1]{JNR1},
$\reg(S/(J_{G_{v_{23}}}+Q_{v_{23}}))=\reg(S/Q_{v_{23}})+1\leq r-1$.
Hence, it follows from Lemma \ref{regularity-lemma} and the short
exact sequence \eqref{ses-v23} that $\reg(S/J_G)=r$.
		\end{proof}
As an immediate consequence, we obtain a class of cactus graphs for
which the upper bound obtained in Corollary \ref{cactus-graph}
is attained. 
\begin{corollary}\label{CM-cactus-graph}
If $G$ is a Cohen-Macaulay indecomposable cactus graph such that $B_1=C_3$ or
$B_l=C_3$, then $\reg(S/J_G)=2\cab(G)+\ca'(G)$. 
\end{corollary}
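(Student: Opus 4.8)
The plan is to derive Corollary \ref{CM-cactus-graph} directly from Theorem \ref{CM-generalized-cactus-graph} by verifying that any Cohen-Macaulay indecomposable cactus graph $G$ with $B_1 = C_3$ or $B_l = C_3$ is, after a harmless relabeling of blocks, an instance of the family treated in the theorem. The starting point is Rinaldo's structural classification recalled in the excerpt: since $G$ is an indecomposable Cohen-Macaulay cactus graph, either $G \in \{K_2, C_3\}$ or the blocks $B_1, \dots, B_l$ satisfy conditions (1)--(4), namely $B_1, B_l \in \{C_3, K_2\}$, $B_2 = B_{l-1} = C_4$, each intermediate $B_i \in \{C_3, C_4\}$ with the constraint that a $C_3$ forces the next block to be $C_4$, and each $C_4$ has exactly two adjacent cut points.

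First I would dispose of the degenerate cases $G \in \{K_2, C_3\}$; these have $\Ba(G)$ a single vertex, so $l = 1$ and one checks $\reg(S/J_G)$ against the formula $2\cab(G) + \ca'(G)$ by direct computation (for $C_3$, a clique, $\cab = 0$ and $\ca' = 1$, giving $\reg = 1$; for $K_2$ similarly). Actually, since the hypothesis requires $B_1 = C_3$ or $B_l = C_3$ and the path has length $l - 1 \geq 0$, I should note which genuinely small cases survive. Next, in the main case I would translate the Rinaldo conditions into the hypotheses of the theorem. The theorem demands $B_1 = K_{m_1}$ and $B_l = K_{m_l}$ with $m_1 \geq 2$, $m_l \geq 3$, and $B_2 = B_{l-1} = C_4$, with each internal block either $C_4$ or a clique $K_{m_j}$ on $m_j \geq 3$ vertices. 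The key observation is that $C_3 = K_3$ and $K_2$ are both cliques, so Rinaldo's $B_1, B_l \in \{C_3, K_2\}$ already lands inside the theorem's clique requirement; the condition $B_l = C_3$ (i.e. $K_3$) supplies exactly the $m_l \geq 3$ the theorem needs on one end.

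The slightly delicate point, and the step I expect to be the main obstacle, is the bookkeeping at the two ends and the treatment of internal $C_3$ blocks, since the theorem's internal blocks are only allowed to be $C_4$ or $K_{m_j}$ with $m_j \geq 3$, whereas Rinaldo permits internal $C_3$'s subject to condition (3). Here I would use that $C_3 = K_3$ is a clique on $3 \geq 3$ vertices, so an internal $C_3$ is already an admissible internal clique block for the theorem; condition (3)'s constraint that a $C_3$ be followed by a $C_4$ is automatically consistent and imposes no obstruction to applying the theorem. The asymmetry $m_1 \geq 2$ versus $m_l \geq 3$ is exactly why the hypothesis "$B_1 = C_3$ or $B_l = C_3$" is needed: relabeling so that the $C_3$ end plays the role of $B_l$ forces $m_l = 3 \geq 3$, while the other end $B_1 \in \{C_3, K_2\}$ satisfies $m_1 \geq 2$. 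I would also confirm that the remaining two ends $B_2 = B_{l-1} = C_4$ and the "exactly two adjacent cut points in each $C_4$" requirement match verbatim between Rinaldo's list and the theorem's hypothesis.

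Having verified that $G$ satisfies every hypothesis of Theorem \ref{CM-generalized-cactus-graph}, the conclusion $\reg(S/J_G) = 2\cab(G) + \ca'(G)$ is immediate, where I would note that the meaning of $\ca'(G)$ (the number of maximal cliques excluding the edges of $C_4$'s) is identical in both statements. The only genuine content of the proof is the case analysis showing the structural hypotheses transfer; everything else is the direct invocation of the theorem. I would therefore keep the writeup short: dispatch the two trivial graphs, then state that conditions (1)--(4) of Rinaldo's classification imply (after orienting the $C_3$ end as $B_l$) precisely the numerical and block-type hypotheses of Theorem \ref{CM-generalized-cactus-graph}, and apply it.
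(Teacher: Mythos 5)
Your proposal is correct and follows exactly the route the paper intends: the paper states the corollary as an immediate consequence of Theorem \ref{CM-generalized-cactus-graph}, obtained by matching Rinaldo's classification of indecomposable Cohen--Macaulay cactus graphs against the theorem's hypotheses, using $C_3 = K_3$ and orienting the path of blocks so that the $C_3$ end is $B_l$ (giving $m_l = 3 \geq 3$, $m_1 \geq 2$). Your additional care with the degenerate case $G = C_3$ (where $l = 1$ and the theorem's hypothesis $l \geq 3$ fails, but the formula $2\cab(G)+\ca'(G)=1=\reg(S/J_G)$ checks directly) and with the explicit relabeling is exactly the bookkeeping the paper leaves implicit.
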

Below, we give examples of two Cohen-Macaulay cactus graphs. The graph $G_1$ is an example of
a graph for which $\reg(S_{G_1}/J_{G_1}) < 2\cab(G_1) + \ca'(G_1)$. The graph $G_2$
shows that the assumption $B_1 = C_3$ or $B_l = C_3$ is not a
necessary one for the equality. If one inputs the binomial edge
ideals corresponding to these two graphs into any of the computational
commutative algebra packages (we used Macaulay2, \cite{M2}) and
compute the regularity, then we get $\reg(S_{G_1}/J_{G_1}) = 6 =
\reg(S_{G_2}/J_{G_2})$. It may be noted that $2\cab(G_1) + \ca'(G_1) = 7$ and
$2\cab(G_2) + \ca'(G_2) = 6$.
\usetikzlibrary{arrows}
\begin{figure}[H]
	\begin{tikzpicture}[scale=2]
\draw (0.5,0.5)-- (1,0.5);
\draw (1,0.5)-- (1,1);
\draw (1,1)-- (1.5,1);
\draw (1.5,1)-- (1.5,0.5);
\draw (1.5,0.5)-- (1,0.5);
\draw (1.5,0.5)-- (1.72,0.17);
\draw (1.72,0.17)-- (2,0.5);
\draw (2,0.5)-- (1.5,0.5);
\draw (2,0.5)-- (2,1);
\draw (2,1)-- (2.5,1);
\draw (2.5,1)-- (2.5,0.5);
\draw (2.5,0.5)-- (2,0.5);
\draw (2.5,0.5)-- (3,0.5);
\draw (3.5,0.5)-- (4,0.5);
\draw (4,0.5)-- (4,1);
\draw (4,1)-- (4.5,1);
\draw (4.5,1)-- (4.5,0.5);
\draw (4.5,0.5)-- (4,0.5);
\draw (4.5,0.5)-- (4.5,0);
\draw (4.5,0)-- (5,0);
\draw (5,0)-- (5,0.5);
\draw (5,0.5)-- (4.5,0.5);
\draw (5,0.5)-- (5.5,0.5);
\begin{scriptsize}
\fill (0.5,0.5) circle (1.5pt);
\fill (1,0.5) circle (1.5pt);
\fill (1,1) circle (1.5pt);
\fill (1.5,1) circle (1.5pt);
	\draw(1.75,-0.3) node{$G_1$};
	\draw(4.57,-0.3) node{$G_2$};
\fill (1.5,0.5) circle (1.5pt);
\draw(1.57,0.62);
\fill (1.72,0.17) circle (1.5pt);
\draw(1.79,0.28);
\fill (2,0.5) circle (1.5pt);
\draw(2.07,0.62);
\fill (2,1) circle (1.5pt);
\draw(2.07,1.12);
\fill (2.5,1) circle (1.5pt);
\draw(2.55,1.12);
\fill (2.5,0.5) circle (1.5pt);
\draw(2.55,0.62);
\fill (3,0.5) circle (1.5pt);
\draw(3.07,0.62);
\fill (3.5,0.5) circle (1.5pt);
\draw(3.56,0.62);
\fill (4,0.5) circle (1.5pt);
\draw(4.08,0.62);
\fill (4,1) circle (1.5pt);
\draw(4.07,1.12);
\fill (4.5,1) circle (1.5pt);
\draw(4.57,1.12);
\fill (4.5,0.5) circle (1.5pt);
\draw(4.57,0.62);
\fill (4.5,0) circle (1.5pt);
\draw(4.57,0.11);
\fill (5,0) circle (1.5pt);
\draw(5.07,0.11);
\fill (5,0.5) circle (1.5pt);
\draw(5.07,0.62);
\fill (5.5,0.5) circle (1.5pt);
\draw(5.56,0.62);
\end{scriptsize}
\end{tikzpicture}
\end{figure}

\bibliographystyle{plain}
\bibliography{Reference}

\begin{thebibliography}{10}

\bibitem{EZ}
Viviana Ene and Andrei Zarojanu.
\newblock On the regularity of binomial edge ideals.
\newblock {\em Math. Nachr.}, 288(1):19--24, 2015.

\bibitem{M2}
Daniel~R. Grayson and Michael~E. Stillman.
\newblock Macaulay2, a software system for research in algebraic geometry.
\newblock Available at \url{http://www.math.uiuc.edu/Macaulay2/}.

\bibitem{haray}
F.~Harary.
\newblock {\em Graph Theory}.
\newblock Addison-Wesley Publishing Company. Boston, 1969.

\bibitem{HH1}
J\"urgen Herzog, Takayuki Hibi, Freyja Hreinsd\'ottir, Thomas Kahle, and
  Johannes Rauh.
\newblock Binomial edge ideals and conditional independence statements.
\newblock {\em Adv. in Appl. Math.}, 45(3):317--333, 2010.

\bibitem{JA1}
A.~V. Jayanthan and Arvind Kumar.
\newblock Regularity of binomial edge ideals of {C}ohen-{M}acaulay bipartite
  graphs.
\newblock {\em Comm. Algebra}, 47(11):4797--4805, 2019.

\bibitem{JAR2}
A.~V. Jayanthan, Arvind Kumar, and Rajib Sarkar.
\newblock Regularity of powers of quadratic sequences with applications to
  binomial ideals.
\newblock {\em J. Algebra}, 564:98--118, 2020.

\bibitem{JNR1}
A.~V. Jayanthan, N.~Narayanan, and B.~V. Raghavendra~Rao.
\newblock Regularity of binomial edge ideals of certain block graphs.
\newblock {\em Proc. Indian Acad. Sci. Math. Sci.}, 129(3):Art. 36, 10, 2019.

\bibitem{Kahle19}
Thomas {Kahle} and Jonas {Kr{\"u}semann}.
\newblock {Binomial edge ideals of cographs}.
\newblock {\em arXiv e-prints}, page arXiv:1906.05510, June 2019.

\bibitem{KM-arxiv13}
Dariush {Kiani} and Sara {Saeedi Madani}.
\newblock {The regularity of binomial edge ideals of graphs}.
\newblock {\em arXiv e-prints}, page arXiv:1310.6126, Oct 2013.

\bibitem{Arv-Jaco}
Arvind {Kumar}.
\newblock {Binomial edge ideals and bounds for their regularity}.
\newblock {\em J. Algebraic Combin.}, (To Appear).

\bibitem{MM}
Kazunori Matsuda and Satoshi Murai.
\newblock Regularity bounds for binomial edge ideals.
\newblock {\em J. Commut. Algebra}, 5(1):141--149, 2013.

\bibitem{oh}
Masahiro Ohtani.
\newblock Graphs and ideals generated by some 2-minors.
\newblock {\em Comm. Algebra}, 39(3):905--917, 2011.

\bibitem{Rinaldo-Cactus}
Giancarlo Rinaldo.
\newblock Cohen-{M}acaulay binomial edge ideals of cactus graphs.
\newblock {\em J. Algebra Appl.}, 18(4):1950072, 18, 2019.

\bibitem{KM18}
M.~{Rouzbahani Malayeri}, S.~{Saeedi Madani}, and D.~{Kiani}.
\newblock {Regularity of binomial edge ideals of chordal graphs}.
\newblock {\em Collect. Math. (To Appear)}, 2020.

\bibitem{KM1}
Sara Saeedi~Madani and Dariush Kiani.
\newblock Binomial edge ideals of graphs.
\newblock {\em Electron. J. Combin.}, 19(2):Paper 44, 6, 2012.

\bibitem{KM2}
Sara Saeedi~Madani and Dariush Kiani.
\newblock On the binomial edge ideal of a pair of graphs.
\newblock {\em Electron. J. Combin.}, 20(1):Paper 48, 13, 2013.

\bibitem{Rajib}
Rajib {Sarkar}.
\newblock {Binomial Edge Ideals of Unicyclic Graphs}.
\newblock {\em arXiv e-prints}, page arXiv:1911.12677, November 2019.

\end{thebibliography}

\end{document}